\newtheorem{theorem}{Theorem}[section]
\newtheorem{question}{Question}
\newtheorem{lemma}[theorem]{Lemma}
\newtheorem{cor}[theorem]{Corollary}
\newtheorem{claim}[theorem]{Claim}
\theoremstyle{definition}
\newtheorem{definition}[theorem]{Definition}
\DeclareMathOperator{\supp}{supp}
\newcommand{\res}{\upharpoonright}
\title{Measurable Regular Subgraphs}
\author{Matt Bowen, Clinton T.~Conley, Felix Weilacher}
\thanks{The first author is supported by Ben Green's Simons Investigator Grant number 376201.  The second author is supported by NSF grant DMS-2154160.
The third author is supported by the NSF under award number DMS-2402064.}
\begin{document}

\begin{abstract}

    We show that every $d$-regular bipartite Borel graph admits a Baire measurable $k$-regular spanning subgraph if and only if $d$ is odd or $k$ is even.  {This gives the first example of a locally checkable coloring problem which is known to have a Baire measurable solution on Borel graphs but not a computable solution on highly computable graphs.}  We also prove the analogous result in the measure setting for hyperfinite graphs.
\end{abstract}

\maketitle

\section{Introduction}


A $\mathbf{k}$\textbf{-factor} is a $k$-regular spanning subgraph of a given graph $G.$ For example, a 1-factor is what is usually called a perfect matching. Characterising the existence of $k$-factors is one of the foundational problems in graph theory, originating with Peterson's $2$-factor theorem \cite{petersen1891theorie} and culminating in Tutte's classification \cite{tutte1954short}.  In the special case of bipartite graphs, K\H{o}nig's edge coloring theorem is equivalent to the existence of $k$-factors in $d$-regular bipartite graphs for all $k\leq d.$

Here, we will be interested in measurable analogous of the factorization problem.  A \textbf{Borel graph} is a graph whose vertex set is a standard Borel space $(X,\mathcal{B})$ and whose edge set is a Borel subset of $X^2.$  The existence of Borel $k$-factors is more subtle than in the classical setting.  Namely, Laczkovich \cite{lacz3} constructed a $2$-regular Borel bipartite graph with no Borel $1$-factor, and Conley and Kechris \cite{conley2013measurable} extended this construction to find $d$-regular bipartite Borel graphs without $k$-factors for all $k<d$ with $d$ even and $k$ odd.  Moreover, the graphs just mentioned are hyperfinite and do not admit Borel $k$-factors even after discarding a null or meager set.

There are at least two directions one can take in trying to extend this collection of counterexamples. One is to ask whether positive results can be obtained with additional structural assumptions on the graphs. Another is to ask about other values of $k$ and $d$. 

There are several recent positive results in the first direction. The connected components of the counterexamples in \cite{conley2013measurable} are all two-ended. In the Baire category and measurable settings with hyperfinteness, this appears to be necessary to some extent: Kechris and Marks \cite{kechris.marks} proved that every acyclic $d$-regular graph admits a Baire measurable $k$ factor for all $k<d$ for $d>2$. It follows from work of Conley, Marks, and Unger that the same is true for measurable $k$-factors of hyperfinite graphs \cite[Lemma 4.2]{conley_marks_unger}. Kastner and Lyons \cite{kastner2023baire} obtained a positive result for $k=1$ in the Baire measurable setting assuming only that their graphs had non-amenable components. Notably, their graphs are not necessarily bipartite. 

For graphs whose components are all one ended, Bowen, Kun, and Sabok \cite[Corollary 3.7]{bowen.kun.sabok} established the existence of measurable $k$-factors in regular bipartite pmp hyperfinite graphs.  It is worth remarking that in the pmp context, hyperfinite graphs have either one or two ends on almost every component \cite[Theorem 22.3]{kechris.miller}.  If the measure is not assumed invariant, as we consider below, hyperfiniteness no longer places constraints on the structure of ends, and the arguments of \cite{bowen.kun.sabok} do not directly apply.

In the one-ended context, Bowen, Poulin, and Zomback \cite{bowen2022one} established the analogous result for Baire measurable $k$-factors in regular bipartite Borel graphs on Polish spaces.

This paper complements these results by proving a positive result in the second direction. 
Indeed, our main result is that, in the Baire measurable setting and the measurable setting with hyperfiniteness, the values of $k$ and $d$ ruled out in the examples from \cite{conley2013measurable} are the only ones which cause problems.

\begin{theorem}\label{th:main}
    Let $0 \leq k \leq d \in \omega$ with $k$ even or $d$ odd, and let $G$ be a $d$-regular bipartite Borel graph on a standard Borel space $X$.
    \begin{itemize}
        \item If $\tau$ is a compatible Polish topology on $X$, then $G$ admits a Borel $k$-factor off a Borel $G$-invariant meager set.
        \item If $\mu$ is a Borel probability measure on $X$ for which $G$ is $\mu$-hyperfinite, then $G$ admits a Borel $k$-factor off a Borel $G$-invariant $\mu$-null set
    \end{itemize}
\end{theorem}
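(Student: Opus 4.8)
\medskip

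The plan is to reduce to the case that $k$ is even and then to construct the $k$-factor as an increasing union of finite partial factors built along a Borel exhaustion of the components of $G$ by finite pieces. First, the parity reduction: if $H$ is a $k$-factor of $G$ then the edge-complement $G\setminus H$ is a $(d-k)$-factor, and since $G$ restricted to any $G$-invariant set is still $d$-regular, this correspondence preserves Borelness and ``off a $G$-invariant meager (resp.\ $\mu$-null) set''. The hypothesis ``$k$ even or $d$ odd'' is preserved under $k\mapsto d-k$, and whenever it holds one of $k$, $d-k$ is even (if $k$ and $d$ are both odd then $d-k$ is even). Replacing $k$ by $d-k$ if need be, we henceforth assume $k$ is \emph{even}; this also disposes of the trivial values $k\in\{0,d\}$.

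\emph{The scaffolding.} On a $G$-invariant Borel set with meager (resp.\ $\mu$-null) complement we fix a Borel way of writing the $G$-component of each point $x$ as an increasing union $K_0(x)\subseteq K_1(x)\subseteq\cdots$ of finite connected vertex sets with controlled boundaries. In the $\mu$-hyperfinite case this comes from the hypothesis: after discarding an invariant $\mu$-null set, take a Borel increasing exhaustion of the connectedness equivalence relation $E_G$ by finite Borel subequivalence relations with $G$-connected classes, and let $K_n(x)$ be the level-$n$ class of $x$. In the Polish case we pass instead to an invariant comeager Borel set on which $G$ carries the requisite structure — a Borel toast, equivalently for our purposes the exhaustion furnished by the relevant generic structural result for $G$, such as a bound on its (Baire-measurable) asymptotic separation index. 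As the complement of the working set is $G$-invariant and meager (resp.\ $\mu$-null), it suffices to build a Borel $k$-factor of $G$ on the working set.

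\emph{The recursion.} Call $v$ \emph{interior at stage $n$} if all of its $G$-neighbours lie in $K_n(v)$, and let $I_n$ be the set of such $v$. We recursively build an increasing sequence $H_0\subseteq H_1\subseteq\cdots$ of Borel edge sets so that at every stage $\deg_{H_n}(v)=k$ for all $v\in I_n$; for $v\notin I_n$ the numbers of already-selected and already-forbidden edges at $v$ lie within the obvious bounds ($\le k$ and $\le d-k$); and, crucially, the partial selection still extends to some $k$-factor of $G$ (maintained via a Borel-verifiable local proxy for this global condition). Edges incident to interior vertices are frozen for good. To pass from stage $n$ to $n+1$ we solve, inside the finite bipartite graph spanned by $K_{n+1}$, a degree-prescription problem: choose the still-undecided edges so that every newly interior vertex of $I_{n+1}\setminus I_n$ attains degree exactly $k$ and the invariant is restored. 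Since $G$ is $d$-regular, each vertex becomes interior at a finite stage and is then frozen, so $H:=\bigcup_n H_n$ is a Borel $k$-factor of $G$ on the working set, as desired.

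\emph{The crux.} Everything reduces to the finite claim that the stage-$(n{+}1)$ extension can always be carried out preserving the invariant, and this is the one place where evenness of $k$ is used. The extension is a transportation/flow feasibility question: its Hall-type ``no bottleneck'' conditions are guaranteed by the regularity of $G$, but there is in addition a single $\mathbb{Z}/2\mathbb{Z}$ constraint on the parity of the ``debt'' that must be pushed across the boundary of the current piece. Because $k$ is even this constraint can always be satisfied — if necessary by deferring the finalization of a few freshly enclosed vertices by a stage, or by exploiting the freedom in the not-yet-decided region near the boundary — so that no parity debt is ever trapped and the property ``still extends to a $k$-factor'' survives; one also strengthens the finite claim so that the chosen extension never spoils later extensions. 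When instead $d$ is even and $k$ is odd this parity obstruction is genuine and localises on two-ended components, where it reproduces precisely the Laczkovich and Conley--Kechris counterexamples, explaining why those pairs alone fail. The residual work is bookkeeping — Borelness at each stage, $G$-invariance of the discarded set, and coherence of the freezing discipline along the exhaustion — which is routine.
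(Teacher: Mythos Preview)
Your reduction to even $k$ is fine, and the scaffolding step (hyperfiniteness, or hyperfiniteness modulo meager in the Baire case, hence a Borel exhaustion by finite connected pieces) is legitimate. The problem is the part you yourself label ``the crux'': you assert the finite extension lemma but never prove it, and every substantive difficulty of the theorem is hidden there. Concretely, you need, at each stage, to extend the partial selection over the annulus $K_{n+1}\setminus K_n$ so that (i) every newly interior vertex lands at degree exactly $k$, (ii) the bounds $\deg_{H_{n+1}}(v)\le k$ and $\deg_{G\setminus H_{n+1}}(v)\le d-k$ persist at the new boundary, and (iii) some invariant guaranteeing future extendibility is restored. You say the Hall-type feasibility conditions ``are guaranteed by the regularity of $G$'', but the annulus is \emph{not} regular---its inner and outer boundary vertices have deficient degree---so this is exactly what needs an argument. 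You also promise a ``Borel-verifiable local proxy'' for the global condition ``still extends to a $k$-factor of $G$'' but never write one down; without it, (iii) has no content, and the recursion could paint itself into a corner. Finally, the handling of the parity obstruction (``if necessary by deferring the finalization of a few freshly enclosed vertices by a stage, or by exploiting the freedom in the not-yet-decided region near the boundary'') is a description of a hope, not a proof; you would need to state precisely which invariant is carried and verify that deferral never cascades indefinitely.

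For comparison, the paper takes a completely different route that sidesteps toast-style extension arguments entirely. It reduces by induction to $k\in\{1,2\}$. The engine is a rounding lemma for fractional perfect matchings: starting from the constant $\tfrac{1}{d}$ function, one first kills cycles in the support by alternating $\pm\tfrac{1}{d}$ shifts (a genericity/Kuratowski--Ulam argument in the Baire case, the known argument of Bowen--Kun--Sabok in the measure case), reducing to an acyclic, leafless support graph. On that support one shows that ``bad rays'' (degree $2$ on every other vertex) are eventually of degree $2$ everywhere, which lets one invoke the Conley--Miller analysis of perfect matchings in acyclic Borel graphs to round to $\{0,1\}$ when $d$ is odd, or to $\{0,\tfrac12,1\}$ when $d$ is even. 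The $k=1$ case for odd $d$ is then immediate. For $k=2$ with $d$ even, a splitting trick turns the $\{0,\tfrac12,1\}$-valued fractional matching into a fractional $2$-matching with odd denominator whose edges at each vertex partition into two sets of total weight $1$; doubling the vertex set along that partition reduces back to the perfect-matching rounding lemma on an auxiliary bipartite graph. None of this requires proving a local extension lemma along an exhaustion; the global existence is carried by the fractional object, and the delicate work is confined to the acyclic support, where the structure of rays is explicit. If you want to pursue your toast approach, the burden is to actually state and prove the finite annulus lemma, including the local proxy invariant; as written, that step is a gap.
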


The case $k=1$ in the measurable setting was previously shown in \cite{bowen.kun.sabok} for $\mu$-preserving graphs. 


Kun \cite{gabor.new} has constructed for all $d$ a $d$-regular acyclic pmp Borel bipartite Borel graph with no measurable $k$-factor for any $0 < k < d$, showing that the hyperfiniteness assumption in the theorem is necessary.
 
The main novelty of the present work consists of applying to the bipartite setting work of \cite{conley.miller.pm} in the acyclic setting.  More precisely, \cite{conley.miller.pm} established that matchings in acyclic graphs exist when there are no rays of degree two on every other vertex.  We also employ a ``splitting trick'' (essentially the content of Subection \ref{sec:2match}) that reduces the problem of finding $k$-factors to that of finding well behaved fractional $1$-factors.

An additional source of interest in Theorem \ref{th:main} is the connection between Baire measurable combinatorics and computable combinatorics of so-called \textbf{highly computable} graphs. I.e, graphs on $\omega$ whose edge relation is a computable subset of $\omega^2$ and whose degree function $\omega \to \omega$ is computable. Qian and Weilacher \cite{qian2022descriptive} investigated this connection, and found that many ``local'' combinatorial problems behaved identically in the two settings.

Theorem \ref{th:main} gives the first examples of locally checkable coloring problems which are known to have a Baire measurable solution on Borel graphs but not computable solutions on highly computable graphs: Indeed, Manaster and Rosenstein \cite{manaster_rosenstein_regular} constructed for all $d > 1$ (crucially, including for $d$ odd) a $d$-regular computably bipartite computable graph with no computable perfect matching. Note that regularity implies that these graphs are in fact highly computable. It is easy to modify their proof to obtain such graphs which admit no computable $k$-factor for any $0 < k < d$. 



For non-bipartite graphs, odd-regularity does not guarantee the existence of a perfect matching, or more generally a $k$-factor for odd $k$. However, Petersen's 2-factor theorem \cite{petersen1891theorie} guarantees the existence of a $k$-factor in any $d$-regular graph with $k$ and $d$ even. Of course, the full theorem follows if one only considers the case $k = 2$. It is natural to ask for descriptive versions of this theorem, and we leave this as a question. 

\begin{question}
    Let $d \in \omega$ be even. Does every $d$-regular Borel graph on a Polish space admit a Borel 2-factor off an invariant meager set?
    
    What about in the measurable setting for hyperfinite graphs?
\end{question}

We mention the following partial result, which lends some support towards a positive answer to this question.

\begin{cor}
    Let $G$ be a $2k$-regular Borel graph which admits a Borel balanced orientation.  
    If $k$ is odd then $G$ admits a Borel 2-factor off an invariant meager set, 
    and if $k$ is even then $G$ admits a Borel 4-factor off an invariant meager set. 
    If $G$ is hyperfinite then the same is true in the measure setting.
\end{cor}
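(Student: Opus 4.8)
The plan is to reduce to Theorem~\ref{th:main} via a bipartite doubling construction. We may assume $k \ge 1$, the case $k = 0$ being trivial. Fix a Borel balanced orientation of $G$; since $G$ is $2k$-regular, every vertex then has in-degree and out-degree $k$, and we write $u \to v$ when the edge $\{u,v\}$ is oriented from $u$ to $v$. Let $H$ be the Borel graph on $X \times \{0,1\}$ with $(u,0)$ adjacent to $(v,1)$ exactly when $u \to v$. Then $H$ is bipartite with parts $X \times \{0\}$ and $X \times \{1\}$, and it is $k$-regular: the degree of $(u,0)$ is the out-degree of $u$ and the degree of $(v,1)$ is the in-degree of $v$, both equal to $k$. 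In the hyperfinite setting, $H$ is $\mu_H$-hyperfinite for $\mu_H(A) := \tfrac12\bigl(\mu(\{x : (x,0)\in A\}) + \mu(\{x : (x,1)\in A\})\bigr)$: the coordinate projection $X\times\{0,1\}\to X$ is Borel and finite-to-one and sends $E_H$-related points to $E_G$-related points, so $E_H$ is contained in the pullback of the $\mu$-hyperfinite equivalence relation $E_G$, hence is $\mu_H$-hyperfinite.

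Apply Theorem~\ref{th:main} to the $k$-regular bipartite Borel graph $H$ with target degree $j := 1$ if $k$ is odd and $j := 2$ if $k$ is even; note that the hypothesis of Theorem~\ref{th:main} holds (``$j$ even or $k$ odd'') and that $j \le k$. We obtain a Borel $j$-factor $M$ of $H$ off a Borel $H$-invariant meager (resp.\ $\mu_H$-null) set. Now pull $M$ back to $G$: let $\bar M$ be the set of edges $\{u,v\}$ of $G$ with $u\to v$ and $\{(u,0),(v,1)\}\in M$. For a vertex $v$ off the exceptional set, the $j$ edges of $M$ incident to $(v,0)$ are precisely the out-edges of $v$ lying in $\bar M$, and the $j$ edges of $M$ incident to $(v,1)$ are precisely the in-edges of $v$ lying in $\bar M$; since each edge of $G$ receives only one orientation, these two sets of edges are disjoint, so $\deg_{\bar M}(v) = 2j$. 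Hence $\bar M$ is a $2j$-factor of $G$ off the exceptional set, i.e.\ a $2$-factor if $k$ is odd and a $4$-factor if $k$ is even.

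The delicate point is the invariance of the exceptional set: Theorem~\ref{th:main} produces an $H$-invariant meager (resp.\ null) set, but its image under the projection $X\times\{0,1\}\to X$ need not be $G$-invariant, and deleting a non-invariant set would change degrees in $G$. This is handled exactly as in the proof of Theorem~\ref{th:main}, by the standard category/measure transfer: after refining to a compatible Polish topology on $X$ in which $G$ is generated by a countable group of homeomorphisms (resp.\ after restricting to a $G$-invariant conull Borel set on which $\mu$ is quasi-invariant), the $G$-saturation of a meager (resp.\ null) Borel set remains meager (resp.\ null), so the projected exceptional set may be enlarged to a $G$-invariant one of the same kind. The orientation trick and the degree count are routine; this bookkeeping is the only part requiring genuine care, and it is of the same nature as what is already needed for Theorem~\ref{th:main}.
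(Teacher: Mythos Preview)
Your argument is exactly the paper's: form the bipartite double $H$ on $X\times\{0,1\}$ using the balanced orientation, note that $H$ is $k$-regular bipartite, apply Theorem~\ref{th:main} to get a $1$- or $2$-factor, and pull back to a $2$- or $4$-factor of $G$. The paper's proof is three sentences and omits the details you supply (regularity of $H$, hyperfiniteness transfer, the degree count on pullback), so your write-up is strictly more informative.

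One caveat on your final paragraph. You correctly flag that $H$-invariance of the exceptional set need not yield $G$-invariance of its projection, an issue the paper passes over in silence. However, your proposed fix in the measure case---restricting to a $G$-invariant conull set on which $\mu$ is $E_G$-quasi-invariant---is not generally available: already for the finite equivalence relation $(x,0)\sim(x,1)$ on $[0,1]\times\{0,1\}$ with $\mu$ supported on $[0,1]\times\{0\}$, no such restriction exists. Likewise, refining the Polish topology changes the meager ideal, so that argument does not obviously give the result for an \emph{arbitrary} compatible topology. The clean repair is to observe that the proof of Theorem~\ref{th:main} (through Lemma~\ref{lem:main} and the cited results of \cite{conley.miller.pm} and \cite{bowen.kun.sabok}) actually produces an exceptional set invariant under any prescribed countable Borel equivalence relation $E\supseteq E_H$, since the Kuratowski--Ulam and measure-exhaustion steps only use countability of $E$-classes. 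Taking $E$ to be the pullback of $E_G$ under the projection $X\times\{0,1\}\to X$ then makes $\pi(N_H)$ automatically $G$-invariant.
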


In particular, the Baire measurable part of the above Corollary applies to all one-ended Borel graphs and to all non-amenable Borel graphs by the results in \cite{bowen2022one} and \cite{kastner2023baire} respectively, and the measurable part to all one-ended hyperfinite measure preserving graphs by the results in \cite{bowen.kun.sabok}.  

\begin{proof}
    Given the orientation of $G,$ construct an auxiliary graph $G'$ with vertex set $\{v_1,v_2: v\in V(G)\}$ and edge set $\{\{u_1,v_2\}: \textnormal{ there is a directed } G-\textnormal{edge from } u \textnormal{ to } v\}. $  Since the orientation is balanced, $G'$ is a $k$-regular bipartite Borel graph, and so Theorem \ref{th:main} gives the desired result.
\end{proof}

{


\section{The proof of Theorem \ref{th:main}}

Observe that if Theorem \ref{th:main} is proved in the cases $k \in \{1,2\}$, the full theorem follows by induction, since if $G$ is a $d$-regular bipartite Borel graph and $H$ is a Borel $k$-factor, $G \setminus H$ is a $(d-k)$-regular bipartite Borel graph.  In Subsection \ref{sec:match} we will address the $k=1$ case, and in Subsection \ref{sec:2match} we will address the $k=2$ case.

Before beginning, we collect some preliminary notions that will be needed in both cases.


\begin{definition}

    A fractional $k$-matching is a symmetric function $f:G \to [0,1]$ such that for each vertex $x$,
    $$ \sum_{yGx} f(x,y) = k.$$
\end{definition}

For example, a fractional $1$-matching is what is usually called a fractional perfect matching. $k$-factors can be naturally identified with $\{0,1\}$-valued fractional $k$-matchings.

In our algorithms, our goal will be to ``round" fractional matchings until they become integral on certain edges.  In order to ensure convergence we will want the algorithm to stabilize on integral edges, and so to keep track of this we will use the following notion, introduced in \cite{bowen.kun.sabok}.

\begin{definition}
    Let $f$ be a fractional perfect matching of a graph $G$. The \textit{support} of $f$, denoted $\supp(f)$, is the subgraph of $G$ consisting of edges on which $f$ is not 0 or 1. 
\end{definition}

Note that $f$'s restriction to $\supp(f)$ is a fractional perfect matching of $\supp(f)$, 
and that
$\supp(f)$ is Borel if $G$ and $f$ are.  Further, $\supp(f)$ cannot have any degree one vertices, as the values of $f$ could not sum to $1$ in this situation.

\subsection{Perfect Matchings}\label{sec:match}


 In this subsection we handle the case $k = 1$ of Theorem \ref{th:main}.

Actually, we will start with a general statement about ``rounding'' fractional perfect matchings which will be key to both sections.

\begin{lemma}\label{lem:main}
    Let $G$ be a bipartite Borel graph, say on $X$, a with a Borel fractional perfect matching $f:G \to \{0, \frac{1}{d}, \frac{2}{d},\ldots, 1\}$ for some $d \in \omega$. Let $L = \{0,1\}$ if $d$ is odd and $L = \{0,\frac{1}{2},1\}$ if $d$ is even. Then $G$ admits a Borel $L$-valued fractional perfect matching\ldots
    \begin{itemize}
        \item \ldots off a Borel $G$-invariant meager set for any compatible Polish topology for $X$.
        \item \ldots off a Borel $G$-invariant null set for any Borel probability measure $\mu$ on $X$ for which $G$ is hyperfinite.
    \end{itemize}
\end{lemma}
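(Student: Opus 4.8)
The plan is to iteratively "round" the fractional perfect matching $f$ by repeatedly finding edges on which we can push the value toward $0$ or $1$ (or, when $d$ is even, toward $\frac12$), shrinking the support at each stage, until the support has vanished in the relevant generic/almost-everywhere sense. The key structural observation is that $\supp(f)$, having minimum degree $\geq 2$ (as noted in the excerpt), and being a subgraph of a bipartite graph, is a bipartite graph of minimum degree $2$. So on each component of $\supp(f)$ there is a cycle, and more usefully, we can orient things along alternating structures. Concretely, I would look for a Borel set of pairwise disjoint alternating cycles, or alternating bi-infinite rays, along which the value of $f$ alternates $+\varepsilon, -\varepsilon$; adding such a signed perturbation preserves the fractional-perfect-matching condition at every vertex (since at each vertex the two incident perturbed edges cancel), and since $f$ took values in $\frac1d\Z$ we can take $\varepsilon = \frac1d$ and thereby set at least one edge to $0$ or $1$, or in the $d$ even case to $\frac12$ when a parity obstruction forces it.

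The first step is to pass to $\supp(f)$, a bipartite Borel graph of minimum degree $\geq 2$. The second step is a dichotomy on its structure in the category/measure sense: either $\supp(f)$ is "large" and we can extract a Borel family of disjoint finite alternating cycles covering a comeager/conull portion of it (this is where I would invoke the standard fact that a Borel graph with no comeager/conull piece of a certain rigid form admits such structures, essentially a Borel/Baire-measurable version of the fact that a minimum-degree-$2$ bipartite graph is covered by cycles and two-sided rays), or $\supp(f)$ generically reduces to a graph built out of bi-infinite degree-$2$ rays. I expect the crucial input here to be exactly the cited work of Conley–Miller \cite{conley.miller.pm}: "matchings in acyclic graphs exist when there are no rays of degree two on every other vertex" — the relevant point being that the leftover part after removing cycles is acyclic with controlled ray structure, and on it one can run the Conley–Miller matching machinery to finish. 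When $d$ is even, the parity obstruction to getting a $\{0,1\}$-valued matching on a single odd alternating structure is precisely what forces admitting value $\frac12$, which is why $L = \{0,\frac12,1\}$ in that case; one arranges the perturbation to leave a single edge at $\frac12$ when the length of an alternating path/cycle is odd.

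The iteration/convergence step: after one round of perturbation, the support strictly shrinks (some edges become integral or reach $\frac12$ and then, by the stabilization property of $\supp$, stay there — this is exactly why the $\supp$ bookkeeping from \cite{bowen.kun.sabok} is set up), and we recurse on the new, smaller support. One must argue that this transfinite process terminates off an invariant meager (resp.\ null) set; the standard device is to note that each vertex can only have its incident values changed finitely often before they are all pinned down, or to run an exhaustion argument over the countably many "levels" $\frac{j}{d}$, peeling one level at a time, and discarding at each stage an invariant meager (resp.\ null) set on which the argument fails — a countable union of such sets is still meager (resp.\ null), and invariance is preserved by saturating.

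The main obstacle I anticipate is the ray case: handling components of $\supp(f)$ that are (or generically look like) bi-infinite paths with degree-$2$ vertices alternating — exactly the configuration flagged in \cite{conley.miller.pm} as the obstruction to Borel matchings in the acyclic setting. Showing that in the Baire-measurable and $\mu$-hyperfinite settings this obstruction can be circumvented — i.e.\ that one can still round $f$ there, perhaps by using that we only need a $\{0,\frac12,1\}$-valued answer when $d$ is even, or by invoking hyperfiniteness to get a Borel line ordering / toast structure along the rays — is where the real work lies, and where I would lean most heavily on the cited acyclic-matching results and on hyperfiniteness (resp.\ Baire category) to produce the needed Borel selection along two-ended pieces.
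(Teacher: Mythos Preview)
Your outline has the right two-stage shape (kill cycles in $\supp(f)$, then handle the acyclic remainder via Conley--Miller), and the paper proceeds the same way. But two points in your proposal are genuine gaps rather than details to be filled in.

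First, your dichotomy ``either cycles cover a comeager/conull piece, or $\supp(f)$ reduces to bi-infinite rays'' is not what happens. After the cycle-killing stage one is left with an acyclic leafless Borel graph, and outside the pmp setting its components can be arbitrary infinite leafless trees, not just lines. To invoke \cite{conley.miller.pm} on such a component you must know there are no injective rays that have degree $2$ on every other vertex. You have not argued this, and in fact it can fail: such rays do occur. The missing idea is a monotonicity argument using the finiteness of the value set $\{\tfrac{1}{d},\ldots,\tfrac{d-1}{d}\}$: along a ray $x_0,x_1,\ldots$ with $\deg(x_{2n})=2$ for all $n$, the quantity $w(n)=f(x_{2n},x_{2n+1})$ is nondecreasing and strictly increases whenever $\deg(x_{2n+1})>2$, so $\deg(x_n)=2$ for all large $n$. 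This lets you trim each bad-ray tail to a finite stub, apply Conley--Miller to the trimmed graph, and then extend the resulting matching back down the tails. Without this observation your appeal to Conley--Miller is unjustified.

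Second, your explanation of why $\tfrac12$ is needed when $d$ is even is off. There are no odd alternating cycles in a bipartite graph, and the obstruction is not a finite parity issue at all. The actual obstruction lives on the bi-infinite path components of $\supp(f)$: when $d$ is odd, $f$ never takes the value $\tfrac12$, so on such a path $f$ must alternate between values ${<}\tfrac12$ and ${>}\tfrac12$, and you simply round to the nearest integer. When $d$ is even, $f$ may be constantly $\tfrac12$ on a bi-infinite path, and there is no Borel way to break the symmetry between the two integral roundings (this is exactly the Laczkovich obstruction), so you are forced to output $\tfrac12$ on those edges. Your plan to ``leave a single edge at $\tfrac12$'' does not make sense here; you leave the whole path at $\tfrac12$.

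Finally, your convergence step for the cycle-killing stage is sketched too loosely. The paper makes it precise by coloring the cycles with countably many colors, running a sequence of $\pm\tfrac1d$ perturbations indexed by $(\omega\times 2)^\omega$, and using Kuratowski--Ulam to pick a generic parameter for which every edge stabilizes; your ``each vertex changes only finitely often'' heuristic is not enough on its own, since a single edge can lie on infinitely many cycles.
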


Note that this immediately implies the $k=1$ case of Theorem \ref{th:main}, since in that case we assume $d$ is odd, and we can take $f$ to be the constant $\frac{1}{d}$ function.

In the pmp setting with hyperfiniteness, Lemma \ref{lem:main} was already known to hold by Theorem 1.3 of \cite{bowen.kun.sabok}.  In that context, every acyclic leafless graph must be a bi-infinite line, so Lemma \ref{lem:main} in the special case of pmp graphs follows from what is Claim \ref{acyclic} of our proof without needing the second half of our argument.  Outside of the pmp setting, we must deal with the case when $\supp(f)$ is an infinitely ended tree.  This is handled by a path decomposition argument and an analysis of matchings in acyclic graphs due to Conley and Miller \cite{conley.miller.pm}.

\begin{proof}[Proof of Lemma \ref{lem:main}]
    We may assume $G$ is locally finite by ignoring edges on which $f = 0$. We start with the following reduction.

    \begin{claim}\label{acyclic}
        It suffices to prove the result when $\supp(f)$ is acyclic.
    \end{claim}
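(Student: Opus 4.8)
The plan is to reach the acyclic case by ``breaking'' every cycle in $\supp(f)$, at the cost of a further invariant meager (resp.\ $\mu$-null) set. After discarding edges on which $f=0$ we may assume $G=\supp(f)$, so that $G$ is locally finite and leafless and $f$ takes values in $\frac1d\mathbb Z\cap[0,1]$.

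The basic local move is the standard one for fractional matchings. Let $C$ be a cycle of $G$; since $G$ is bipartite $|C|$ is even, so we may label its edges ``even'' and ``odd'' alternately around $C$. For $\epsilon\in\frac1d\mathbb Z_{>0}$ let $f^C_\epsilon$ be obtained from $f$ by adding $\epsilon$ on the even edges of $C$, subtracting $\epsilon$ on the odd edges, and leaving all other edges unchanged, and take $\epsilon=\epsilon(C)$ maximal subject to $f^C_\epsilon$ being $[0,1]$-valued. One checks: (i) $f^C_\epsilon$ is again a fractional perfect matching valued in $\frac1d\mathbb Z$, since the net change of $f$ at every vertex of $C$ is $0$ and $\epsilon(C)\ge\frac1d$ (numerators of edge values in $\supp(f)$ lie in $\{1,\dots,d-1\}$); (ii) $\supp(f^C_\epsilon)\subsetneq\supp(f)$, since at least one edge of $C$ is driven to $0$ or to $1$; (iii) $\supp(f^C_\epsilon)$ is still leafless, because whenever an edge of $C$ leaves the support at a vertex $x$, the other $C$-edge at $x$ absorbs its weight and either also leaves the support (exactly when $x$ had $G$-degree $2$) or stays strictly between $0$ and $1$. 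The same move may be carried out simultaneously along any family of pairwise \emph{edge}-disjoint cycles, as the perturbations then do not interact on any edge and still cancel at every vertex.

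To organize this globally I would break cycles along a Borel exhaustion of $G$ by finite subgraphs. In the measure setting, $\mu$-hyperfiniteness of $G$ provides, off an invariant $\mu$-null set, a Borel toast: an increasing sequence $G_0\subseteq G_1\subseteq\cdots$ of Borel subgraphs with finite connected components, with $\bigcup_nG_n=G$, and with each component of $G_n$ contained in the interior of a component of $G_{n+1}$. In the Baire category setting one works instead on an invariant comeager Borel set carrying such an exhaustion. Processing levels in order, at stage $n$ and inside each component $P$ of $G_n$ one runs the finite deterministic procedure of repeatedly breaking (say) a lexicographically least cycle that lies in $P$, using the move above; a standard recursive construction along the toast — finalizing the value on an edge once it sits well inside a toast component with all cycles through it already destroyed — yields Borel functions $f_n$ that stabilize edge by edge to a Borel fractional perfect matching $f_\infty$ valued in $\frac1d\mathbb Z$. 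Any cycle of $\supp(f_\infty)$ would lie inside some $G_n$ and hence would have been destroyed, so $\supp(f_\infty)$ is acyclic. Applying the (assumed) acyclic case of the Lemma to the pair $(G,f_\infty)$ — whose hypotheses hold with the \emph{same} $d$, hence the same $L$ — produces the desired Borel $L$-valued fractional perfect matching of $G$ off one more invariant meager (resp.\ null) set; the union of the discarded sets is still invariant meager (resp.\ null).

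The main obstacle is the bookkeeping along the exhaustion: arranging the finitary cycle-breaking inside a component of $G_n$ so that it ultimately touches only edges that remain interior at all later stages, so that the $f_n$ genuinely converge pointwise to a Borel limit. Closely related, and the only real subtlety in the Baire category case (where hyperfiniteness is not assumed), is securing the exhaustion itself on an invariant comeager Borel set, i.e.\ reducing to a ``toast-like'' situation generically. The remaining verifications — items (i)--(iii) above and the observation that the reduction costs only a small invariant set — are routine if slightly fiddly finite checks.
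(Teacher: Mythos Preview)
Your overall plan---perturb $f$ along cycles until $\supp(f)$ becomes acyclic---is the paper's plan too, but the execution differs substantially, and the paper's route is designed precisely to avoid the two obstacles you flag at the end. In the Baire category case the paper does \emph{not} use a toast or any exhaustion by finite pieces. Instead it fixes a Borel $\omega$-coloring $c$ of the cycles of $G$ so that cycles sharing a vertex receive distinct colors, together with a Borel selection $\phi$ of one edge per cycle, and then lets a parameter $\sigma\in(\omega\times 2)^\omega$ drive the construction: at step $i$ every cycle of color $\sigma_0(i)$ still contained in the current support is perturbed by $\pm\tfrac1d$, with the sign at $\phi(C)$ dictated by $\sigma_1(i)$. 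For each fixed edge $e$ a density argument shows that comeagerly many $\sigma$ stabilize $f_i^\sigma(e)$ within $d$ further steps (repeatedly pick the color of some surviving cycle through $e$ and the sign that makes the value at $e$ decrease). Kuratowski--Ulam plus local countability then produces a single $\sigma$ that is good for every edge off an invariant meager set. In the measure setting the paper simply defers to the corresponding step of \cite{bowen.kun.sabok}, noting that measure-preservation is not used there.

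The gaps you acknowledge are genuine. Obtaining a toast generically for an arbitrary locally finite Borel graph is itself a theorem you would need to supply or cite, not a triviality. More seriously, your convergence sketch does not work as written: even once $e$ lies in the interior of a level-$n$ piece $P$ with the current support acyclic inside $P$, a cycle through $e$ at a later stage may exit and re-enter $P$, and breaking it will move $f(e)$ again; nothing in your procedure prevents this from recurring infinitely often. Making the level-$n$ cycle-breaking touch only edges ``new'' at level $n$ (or some equivalent device) is exactly the missing bookkeeping, and it is not routine. The paper's random-$\sigma$ argument sidesteps this entirely, since edge-by-edge stabilization is delivered by genericity rather than by any geometric separation of the cycles.
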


    \begin{proof}
    We will, after throwing away a meager/null Borel $G$-invariant set, replace $f$ with another $\{0,\frac{1}{d},\ldots,1\}$-valued Borel fractional perfect matching, call it $g$, such that $\supp(g)$ is acyclic. In the measurable setting, this is done in the proof of Theorem 1.3 in \cite{bowen.kun.sabok}. (That proof assumes measure preservation, but this step does not use this assumption.)

    The same idea gives a proof in the category setting: Let $\sigma = (\sigma_0,\sigma_1) \in (\omega \times 2)^\omega$. We will build a uniformly Borel sequence of $\{0,\frac{1}{d},\ldots,1\}$-valued
    fractional perfect matchings $f = f_0^\sigma, f_1^\sigma, \ldots$ with $\supp(f_{i+1}) \subseteq \supp(f_i)$ for each $i$. First, using a lemma of Kechris-Miller (see, e.g., \cite[Proposition 3]{conley.miller.toast}), we may fix a Borel $\omega$-coloring $c$ of the set of cycles of $G$ such that any cycles sharing a vertex get different colors. Also fix a Borel function $\phi$ which selects an edge from each cycle of $G$ using the Lusin-Novikov uniformization theorem.

    Given $f_i^\sigma$, let $S$ be the set of cycles $C$ of $G$ with $c(C) = \sigma_0(i)$ and $C \subseteq \supp(f_i)$. For any $C \in S$, since $f_i^\sigma$ takes only the values $\{\frac{1}{d},\ldots,\frac{d-1}{d}\}$ on $C$ and $C$ is even, we can alternate adding and subtracting $\frac{1}{d}$ from the edges of $C$ and still get a fractional perfect matching, and this does not increase the support. There are two ways of doing this: one where the value on $\phi(C)$ increases and one where it decreases. Let us do the former if $\sigma_1(i) = 0$ and the latter otherwise. Since the cycles in $S$ are pairwise disjoint, we can do this for all of them simultaneously. This will be our $f_{i+1}^\sigma$.

    Let $e$ be an edge of $G$. Call $\sigma$ \textit{good} for $e$ if the value $f_i^\sigma(e)$ eventually stabilizes and every cycle containing $e$ is eventually not contained in $\supp(f_i^\sigma)$. We claim the set of good $\sigma$ for a given $e$ is comeager: indeed, given a finite initial segment of $\sigma$, there is an extension of length at most $d$ which forces this to be the case: it corresponds to always choosing the color of some cycle in $\supp(f_i)$ containing $e$, if there is one, and always choosing the value at $e$ to decrease: this will either cause the value at $e$ to eventually be 0, at which point it will stabilize, or it will stop when there are no more such cycles. 

    Therefore, by the Kuratowski-Ulam theorem and local countability we can find a $\sigma$ and a Borel comeager $G$-invariant set on which the $f_i^\sigma$'s are good for each edge. We can then define $g$ to be their limit on this set, and $\supp(g)$ will be acyclic by the definition of good.
    \end{proof}

    Thus, assume that $\supp(f)$ is acyclic. 
    Recall that it has no degree 1 vertices. 
    Much of the work will now be done by the main results from \cite{conley.miller.pm}. To apply them, we need to say something about infinite rays through $\supp(f)$ which have degree 2 on every other vertex. Call such a ray a \textit{bad ray}

    \begin{claim}
        Every bad ray has degree 2 on all but finitely many indices. 
    \end{claim}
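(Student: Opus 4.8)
The plan is to propagate the fractional-perfect-matching equation of $f$ along the ray and combine it with the fact that $f$ takes only finitely many values.

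Fix a bad ray $v_0, v_1, v_2, \ldots$ through $\supp(f)$. Its vertices of degree $2$ are either exactly those of even index or exactly those of odd index, so after deleting $v_0$ if necessary (which changes only one index, and hence is harmless for the statement ``all but finitely many'') I may assume $\deg_{\supp(f)}(v_{2j}) = 2$ for every $j \geq 0$. Set $a_i := f(\{v_i, v_{i+1}\})$; since each ray edge lies in $\supp(f)$, every $a_i$ lies in the finite set $\{\frac{1}{d}, \ldots, \frac{d-1}{d}\}$. I would first observe that no $G$-edge incident to a ray vertex can have $f$-value $1$, since otherwise all other edges at that vertex — in particular one of the ray edges — would be forced to value $0$. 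Two consequences of the matching equation follow. At each $v_{2j}$ with $j \geq 1$, the two ray edges are the only edges at $v_{2j}$ on which $f$ is nonzero, so $a_{2j-1} + a_{2j} = 1$. At each $v_{2j+1}$, writing $s_j \geq 0$ for the total $f$-weight at $v_{2j+1}$ carried by edges other than the two ray edges, the matching equation gives $a_{2j} + a_{2j+1} = 1 - s_j$; since only edges of $\supp(f)$ can contribute to $s_j$, we have $s_j = 0$ if and only if $\deg_{\supp(f)}(v_{2j+1}) = 2$.

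The crux is a monotonicity. For $j \geq 1$, subtracting $a_{2j} + a_{2j+1} \leq 1$ from $a_{2j-1} + a_{2j} = 1$ gives $a_{2j+1} \leq a_{2j-1}$, so $a_1 \geq a_3 \geq a_5 \geq \cdots$. A non-increasing sequence in the finite set $\{\frac{1}{d}, \ldots, \frac{d-1}{d}\}$ is eventually constant, so there is $J$ with $a_{2j-1} = a_{2j+1}$ for all $j > J$. For such $j$ we get $a_{2j} + a_{2j+1} = a_{2j} + a_{2j-1} = 1$, hence $s_j = 0$ and $\deg_{\supp(f)}(v_{2j+1}) = 2$. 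Together with the standing assumption on the even-indexed vertices, this shows $\deg_{\supp(f)}(v_i) = 2$ for all $i \geq 2J + 3$, which establishes the claim.

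I do not anticipate a real obstacle: the argument is short once the monotone quantity $a_1 \geq a_3 \geq \cdots$ is isolated. The two points needing a little care are the remark ruling out weight-$1$ edges at ray vertices — this is exactly what makes ``$\deg_{\supp(f)} = 2$'' equivalent to ``incident to the two ray edges and nothing else of positive weight'' — and the bookkeeping that normalizes the degree-$2$ vertices to even positions. Note that acyclicity of $\supp(f)$, although available, is not needed for this claim; it only guarantees that the ray is a simple path.
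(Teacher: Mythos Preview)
Your proof is correct and follows essentially the same approach as the paper: both propagate the matching constraint along the ray to produce a monotone sequence in the finite value set $\{\tfrac{1}{d},\ldots,\tfrac{d-1}{d}\}$, forcing eventual stabilization and hence eventual degree~$2$. The only cosmetic difference is that you track the non-increasing sequence $a_1 \geq a_3 \geq \cdots$ while the paper tracks the complementary non-decreasing sequence $w(n) = a_{2n}$ (these are equivalent via $a_{2j-1} + a_{2j} = 1$), and you are slightly more explicit in ruling out weight-$1$ edges at ray vertices.
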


    \begin{proof}
        Let $x_0,x_1,x_2,\ldots$ be a bad ray, say where each $x_n$ for $n$ even has $\supp(f)$-degree 2. For each $n \in \omega$, let $w(n) = f(x_{2n},x_{2n+1})$. 
        Suppose $n$ is such that $x_{2n+1}$ has $\supp(f)$-degree 2, so that its only neighbors are $x_{2n}$ and $x_{2n+2}$. Then $f(x_{2n+1},x_{2n+2}) = 1 - w(n)$, so $w(n+1) = f(x_{2n+2},x_{2n+3}) = w(n)$. On the other hand, suppose $x_{2n+1}$ has degree $>2$, so that it has some other $\supp(f)$-neighbor, say $y$. Since $f(x_{2n+1},y) > 0$ by definition of support, $f(x_{2n+1},x_{2n+2}) < 1 - w(n)$, so $w(n+1) > w(n)$. Since $w$ takes values in $\{\frac{1}{d},\ldots,\frac{d-1}{d}\}$, this increase can happen only finitely many times.
    \end{proof}

    We will now describe how to get the promised $g$. Let $X' \subseteq X$ be the union of $\supp(f)$-components which are not bi-infinite paths, and consider the graph $H := \supp(f) \res X'$. By the claim, every tail equivalence class of bad rays in $X'$ is uniquely represented by a bad ray $(x_0)_{n \in \omega}$ such that $\deg(x_0) = 3$ and $\deg(x_n) = 2$ for all $n > 0$. Let $H'$ be the induced subgraph of $H$ obtained by deleting $x_n$ for $n > 1$ for each such representative. Let $Y$ be the set of $x_1$'s for such representatives. Then each $x \in Y$ has $H'$-degree 1, and all other vertices in $H'$ have the same degree they had in $H$. It follows that $H'$ has no bad rays. 

    It now follows from the main results in \cite{conley.miller.pm} that $H'$ admits a Borel matching, say $M'$, off a meager/null (the latter if it is hyperfinite) invariant set which covers all vertices not in $Y$. For example, we may attach a 3-regular tree of new vertices to each $x \in Y$ with $x$ as the root to obtain a Borel acyclic graph with all vertices having degree $\geq 2$ and no bad rays, then find a perfect matching of this graph, then consider the intersection of this matching and $H'$.  

    This extends to a Borel perfect matching, say $M$, of $H$ off a meager/null invariant set. For each representative $(x_0)$ as before, if $(x_0,x_1) \in M'$, we add $(x_n,x_{n+1})$ to $M$ for each even $n$. Else, for each odd $n$. On the components of $H$, we set $g$ to be the characteristic function of $M$.  
    

    The remaining components of $\supp(f)$ are bi-infinite paths. If $d$ is odd, $f$ must alternate between $<\frac{1}{2}$ and $>\frac{1}{2}$ on the edges in such components, so we can define $g$ on such components as $f$ rounded to the nearest integer. If $d$ is even, then we are allowed to set $g = \frac{1}{2}$ on all the edges in such components. 
\end{proof}

\subsection{2-factors}\label{sec:2match}

In this subsection we prove the $k=2$ case of Theorem \ref{th:main}. If $d$ is odd, then by the previous case, $G$ admits a Borel $(d-1)$-factor, say $H$, off a
meager/null set, and then a 2-factor of $H$ is a 2-factor of $G$. Thus we may assume $d$ is even. 

\begin{lemma}\label{lem:tech}
    Let $G$ be a bipartite Borel graph, say on $X$, with a Borel fractional $2$-matching $f: G \to \{0,\frac{1}{m},\ldots,1\}$ for some $m$ odd 
    with the property that for every vertex $x \in X$, there is a partition of the edges incident to $x$ into two sets such that the values of $f$ in each set sum to 1. Then $G$ admits a Borel 2-factor\ldots
    \begin{itemize}
        \item \ldots off a Borel $G$-invariant meager set for any compatible Polish topology for $X$.
        \item \ldots off a Borel $G$-invariant null set for any Borel probability measure $\mu$ on $X$ for which $G$ is hyperfinite.
    \end{itemize}
\end{lemma}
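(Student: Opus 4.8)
The plan is to run the \emph{splitting trick} advertised in the introduction: from the pair $(G,f)$ we build an auxiliary bipartite Borel graph $G'$ carrying a Borel fractional \emph{perfect} matching $f'$ whose values again lie in $\{0,\tfrac1m,\ldots,1\}$; since $m$ is odd, Lemma~\ref{lem:main} applied to $(G',f')$ yields, off a small invariant set, a $\{0,1\}$-valued fractional perfect matching of $G'$, i.e.\ an honest Borel perfect matching $M'$; and collapsing $G'$ back onto $G$ converts $M'$ into the sought $2$-factor.

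In detail: discarding the edges on which $f$ vanishes changes nothing and leaves $G$ locally finite, with each vertex now meeting at most $2m$ edges. Using the hypothesis together with a Borel uniformization, select for each $x$ a partition $E_x^0\sqcup E_x^1$ of the edges at $x$ with $\sum_{e\in E_x^i}f(e)=1$ for $i=0,1$; the valid partitions at $x$ form a nonempty finite set depending Borel-measurably on $x$. Let $G'$ have vertex set $X\times\{0,1\}$, writing $x^i=(x,i)$; for each edge $\{x,y\}$ of $G$, if $\{x,y\}\in E_x^i\cap E_y^j$ put the edge $\{x^i,y^j\}$ into $G'$ and set $f'(x^i,y^j):=f(x,y)$. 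If $X=A\sqcup B$ is a Borel bipartition of $G$, then $(A\times\{0,1\},\,B\times\{0,1\})$ is one for $G'$, so $G'$ is a bipartite Borel graph; and for every $x$ and $i$ we have $\sum_{y'G'x^i}f'(x^i,y')=\sum_{e\in E_x^i}f(e)=1$, so $f'$ is a Borel fractional perfect matching with values in $\{0,\tfrac1m,\ldots,1\}$.

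Apply Lemma~\ref{lem:main} to $(G',f')$ with $m$ playing the role of $d$; as $m$ is odd it furnishes, off a Borel $G'$-invariant meager (respectively $\mu'$-null) set, a Borel perfect matching $M'$ of $G'$. In the measure case take $\mu':=\tfrac12\big((\iota_0)_*\mu+(\iota_1)_*\mu\big)$, where $\iota_i(x)=x^i$; then $\pi_*\mu'=\mu$ for the projection $\pi\colon X\times\{0,1\}\to X$, and $G'$ is $\mu'$-hyperfinite since a finite Borel exhaustion of $G$ on a conull $G$-invariant set lifts along the two-to-one map $\pi$. Now $\pi$ is injective on the edges of $G'$ --- the unique $G'$-edge lying over $\{x,y\}$ is the one dictated by the partitions at $x$ and at $y$ --- so $M:=\{\pi(e):e\in M'\}$ is a subgraph of $G$, and at any $x$ with both $x^0,x^1$ matched by $M'$ those two matched edges descend to the only two edges of $M$ at $x$, which are moreover distinct (one lies in $E_x^0$, the other in $E_x^1$). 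Thus $M$ is a $2$-factor of $G$ away from the $\pi$-image of the exceptional set.

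The one genuinely delicate point is ensuring that this $\pi$-image lies inside a $G$-invariant meager/null Borel set. The obstruction is that $x^0$ and $x^1$ may sit in different $G'$-components, so the exceptional set from Lemma~\ref{lem:main}, being only $G'$-invariant, need not have $G$-invariant image. I would address this by noting that in the proof of Lemma~\ref{lem:main} the topology and the measure enter only through the Kuratowski--Ulam theorem and the matching theorem of \cite{conley.miller.pm}, and that the associated exceptional sets are constructed by working inside individual components; consequently each of them can be taken invariant not merely for $G'$ but for any countable Borel equivalence relation containing $G'$, in particular for the relation whose classes are the $\pi$-preimages of $G$-components. Such an exceptional set is $\pi$-saturated, so its $\pi$-image is $G$-invariant, and it is meager (respectively $\mu$-null) in $X$ because $\pi$ is a proper surjection with $\pi_*\mu'=\mu$. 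So the real work is this invariance bookkeeping; the splitting construction itself and the verification that $M$ is a $2$-factor are routine.
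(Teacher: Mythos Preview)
Your argument is correct and is essentially the paper's own proof: the same auxiliary graph $G'$ on $X\times\{0,1\}$, the same induced fractional perfect matching, the same appeal to Lemma~\ref{lem:main} with $m$ odd, and the same pullback to a $2$-factor. The only place you add content is the final paragraph on invariance, and there you work harder than necessary: since $\pi$ is two-to-one the projection of a meager/$\mu'$-null $G'$-invariant set is meager/$\mu$-null in $X$, and its $G$-saturation remains so because $G$ is locally countable, so there is no need to reopen the proof of Lemma~\ref{lem:main}.
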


\begin{proof}
    For each $x \in X$, let $(P_x^0, P_x^1)$ be a partition of the edges incident to $x$ as in the lemma statement. By the Lusin-Novikov uniformization theorem, we can assign these partitions in a Borel fashion. We will define a Borel graph, say $G$, whose vertex set is $X \times 2$. (The obvious choices for measure or topology here will be fine.) For each edge $e = (x,y) \in G$, include a corresponding edge $e' = ((x,i),(y,j))$ in $G'$, where $i,j$ are such that $e \in P_x^i,P_y^j$. $G'$ is bipartite since the first projection $(X \times 2) \to X$ is a homomorphism from $G'$ to $G$. By hypothesis, the function $e' \mapsto f(e)$ is a Borel fractional perfect matching of $G'$. By Lemma \ref{lem:main}, since $m$ is odd, $G'$ has a Borel perfect matching off a meager/null set, call it $H' \subseteq G'$. But now $\{e \in G \mid e' \in H'\}$ is a Borel 2-factor of $G$ off a meager/null set. 
\end{proof}

We now show how to build such an $f$, possibly after throwing away a meager/null set, for any $d$-regular graph:

\begin{proof}[Proof of Theorem \ref{th:main}]
    
    

    Recall that $d$ is even. Apply Lemma \ref{lem:main} to get a fractional perfect matching $g : G \to \{0,\frac{1}{2},1\}$ off a meager/null set. We now define a new function $f$ which has value $\frac{1}{d-1}$ on $g^{-1}(0)$, $\frac{(d/2)}{d-1}$ on $g^{-1}(1/2)$, and $1$ on $g^{-1}(1)$. This is a fractional 2-matching because each vertex either has one incident edge with $g = 1$ and $d-1$ with $g = 0$, or two with $g = 1/2$ and $d-2$ with $g = 0$. Also, to witness the condition of Lemma \ref{lem:tech}, for the first type of vertex we can take a partition where one of the sets consists of the unique incident edge in $g^{-1}(0)$, and for the second type of vertex we can have each set in the partition consist of one incident edge from $g^{-1}(1/2)$ and $\frac{d-2}{2}$ from $g^{-1}(0)$.    
\end{proof}



\bibliographystyle{amsalpha} 
\bibliography{main}

\providecommand{\bysame}{\leavevmode\hbox to3em{\hrulefill}\thinspace}
\providecommand{\MR}{\relax\ifhmode\unskip\space\fi MR }
\providecommand{\MRhref}[2]{%
  \href{http://www.ams.org/mathscinet-getitem?mr=#1}{#2}
}
\providecommand{\href}[2]{#2}
\begin{thebibliography}{CMU20}

\bibitem[BKS21]{bowen.kun.sabok}
Matthew Bowen, G{\'a}bor Kun, and Marcin Sabok, \emph{Perfect matchings in
  hyperfinite graphings}, arXiv preprint arXiv:2106.01988 (2021).

\bibitem[BPZ22]{bowen2022one}
Matthew Bowen, Antoine Poulin, and Jenna Zomback, \emph{One-ended spanning
  trees and definable combinatorics}, arXiv preprint arXiv:2210.14300 (2022).

\bibitem[CK13]{conley2013measurable}
Clinton~T Conley and Alexander~S Kechris, \emph{Measurable chromatic and
  independence numbers for ergodic graphs and group actions}, Groups, Geometry,
  and Dynamics \textbf{7} (2013), no.~1, 127--180.

\bibitem[CM16]{conley.miller.toast}
Clinton~T. Conley and Benjamin~D. Miller, \emph{A bound on measurable chromatic
  numbers of locally finite {B}orel graphs}, Math. Res. Lett. \textbf{23}
  (2016), no.~6, 1633--1644.

\bibitem[CM17]{conley.miller.pm}
\bysame, \emph{Measurable perfect matchings for acyclic locally countable
  {B}orel graphs}, J. Symb. Log. \textbf{82} (2017), no.~1, 258--271.

\bibitem[CMU20]{conley_marks_unger}
Clinton~T Conley, Andrew~S Marks, and Spencer~T Unger, \emph{Measurable
  realizations of abstract systems of congruences}, Forum of Mathematics,
  Sigma, vol.~8, Cambridge University Press, 2020, p.~e10.

\bibitem[KL23]{kastner2023baire}
Alexander Kastner and Clark Lyons, \emph{Baire measurable matchings in
  non-amenable graphs}, arXiv preprint arXiv:2310.20047 (2023).

\bibitem[KM04]{kechris.miller}
Alexander~S. Kechris and Benjamin~D. Miller, \emph{Topics in orbit
  equivalence}, Lecture Notes in Mathematics, vol. 1852, Springer-Verlag,
  Berlin, 2004.

\bibitem[KM16]{kechris.marks}
Alexander~S. Kechris and Andrew~S. Marks, \emph{Descriptive graph
  combinatorics}, 2016, preprint.

\bibitem[Kun21]{gabor.new}
G{\'a}bor Kun, \emph{The measurable hall theorem fails for treeings}, arXiv
  preprint arXiv:2106.02013 (2021).

\bibitem[Lac88]{lacz3}
M.~Laczkovich, \emph{Closed sets without measurable matching}, Proc. Amer.
  Math. Soc. \textbf{103} (1988), no.~3, 894--896.

\bibitem[MR73]{manaster_rosenstein_regular}
Alfred~B. Manaster and Joseph~G. Rosenstein, \emph{Effective matchmaking and
  k-chromatic graphs}, Proceedings of the American Mathematical Society
  \textbf{39} (1973), no.~2, 371--378.

\bibitem[Pet91]{petersen1891theorie}
Julius Petersen, \emph{Die theorie der regul{\"a}ren graphs}, Acta Mathematica
  \textbf{15} (1891), 193--220.

\bibitem[QW22]{qian2022descriptive}
Long Qian and Felix Weilacher, \emph{Descriptive combinatorics, computable
  combinatorics, and asi algorithms}, arXiv preprint arXiv:2206.08426 (2022).

\bibitem[Tut54]{tutte1954short}
William~Thomas Tutte, \emph{A short proof of the factor theorem for finite
  graphs}, Canadian Journal of mathematics \textbf{6} (1954), 347--352.

\end{thebibliography}

\end{document}